\let\sssize\scriptscriptstyle
\let\tsize\textstyle
\let\smc\scshape
\let\boldkey\boldsymbol
\newtheorem{proc}{Proposition}
\theoremstyle{definition}\newtheorem{definition}{Definition}
\theoremstyle{remark}\newtheorem{rem}{Remark}
\font\ssm=cmss10
\font\ssb=cmssbx10  
\font\sssm=cmss8
\def\p#1{\dfrac\partial{\partial#1}}
\def\pp#1#2{\dfrac{\partial#1}{\partial#2}}
\def\r{\overset{\sssize-1\,}r}
\def\x{\boldkey x}
\def\u{\boldkey u}
\def\du{{\bold{\dot{\boldkey u}}}}
\def\ddu{{\bold{\ddot{\boldkey u}}}}
\def\bd{{\boldsymbol\cdot}}
\def\z{{\boldkey z}}
\title[A first order prolongation of the conventional space]{A first order prolongation\\of the conventional space}
\author{R. Ya. Matsyuk}
\address{15 Dudayev St. 79005 L'viv Ukraine}
\email{romko.b.m@gmail.com; matsyuk@lms.lviv.ua}
\urladdr{{\url{http://www.iapmm.lviv.ua/12/eng/files/st_files/matsyuk.htm}}}
\keywords{Higher-order connection, Inverse variational problem, Invariance, Uniform relativistic acceleration}
\subjclass{53B40 58E30 70H 49N45}
\begin{document}
\selectlanguage{english}
\bibliographystyle{amsplain}
{\vglue-12mm
\footnotesize
\leftline{\href{http://www.univie.ac.at/EMIS/proceedings/6ICDGA/IV/}
{\textbf{DIFFERENTIAL GEOMETRY AND APPLICATIONS}}}
\leftline{Proc. Conf., Aug.~28 -- Sept.~1, 1995, Brno, Czech Republic}
\leftline{Masaryk University, Brno 1996, 403--415}
\vskip32mm}
\begin{abstract}A variational equation of the third order in three-dimensional
space is proposed which describes autoparallel
curves of some connection
\end{abstract}
\maketitle
We shall focus on three-dimensional \hbox{(pseudo-)} \!Euclidean
space and consider the problem of finding a third-order variational-type
equation which can be put down in the form of the autoparallel transport equation
for some non-linear connection. It is common to introduce the latter
in the following form:
\begin{equation}
\dddot x^{\,\rho}=f^{\rho}(x^{\beta}, \dot x^{\beta}, \ddot x^{\beta})\,.
\label{1}
\end{equation}
On the other side, an Euler-Poisson
third-order equation is always of the affine type,
\begin{equation}
{\mathcal{A}}_{\rho\mu}\dddot x^{\,\mu}+k_{\rho}(x^{\beta}, \dot x^{\beta}, \ddot x^{\beta})=0\,,
\label{2}
\end{equation}
with a skew-symmetric matrix $\pmb{{\mathcal{A}}}=({\mathcal{A}}_{\rho\beta})$, and, consequently,
in the case when the number of equations equals {\it three\/}, can not be solved
with respect to the derivatives of the third order. What one can undertake in this
situation is at most to look for such a variational equation, which
describes the geodesic curve only up to reparametrization.
\section{General setting.}
One algorithm for building up an attached connection to a third order differential
equation of a certain class was presented in \cite{EvtushikKenerovo}, and we shall follow it here.
Although only \hbox{(pseudo-)} \!Euclidean space will be considered, to give the Reader a sense
of general setting, some constructions will be described as developed over
an $n$-dimensional manifold $M$. A differential equation of the third order
will be understood to be a cross-section of the third-order velocity
manifold
$T''M=J_{\sssize 3}({\mathbb{R}}_{\sssize 0};\, M)=\{\x;\u,\du,\ddu\}$,
fibred over the second order one,
$T'M=J_{\sssize 2}({\mathbb{R}}_{\sssize 0};\, M)=\{\x;\u,\du\}$.
These fibred manifolds are associated, as fibre bundles, to the principle
fibre bundles of the third-order and second-order frames,
$H''=\tilde J_{\sssize 3}({\mathbb{R}}^{n}_{\sssize 0};\, M)=\{x^{\rho};r^{\rho}_{\beta},r^{\rho}_{\beta\gamma},r^{\rho}_{\beta\gamma\delta}\}$,
and
$H'=\tilde J_{\sssize 2}({\mathbb{R}}^{n}_{\sssize 0};\, M)=\{x^{\rho};r^{\rho}_{\beta},r^{\rho}_{\beta\gamma}\}$,
where the tilde means that only invertible jets count, and also we shall denote
the inverse to the matrix $(r^{\rho}_{\beta})$ by $(\r^{\gamma}_{\delta})$.
The cotangent space to the manifold $H'$ is spanned by the following set
of differential forms (with coefficients from above the manifold $H''$)
\begin{align*}
\omega^{\rho}&\Doteq\r^{\rho}_{\mu}dx^{\mu}\,,\\
\omega^{\rho}_{\beta}&\Doteq\r^{\rho}_{\mu}dr^{\mu}_{\beta}
-\r^{\rho}_{\mu}r^{\mu}_{\beta\nu}\r^{\nu}_{\lambda}dx^{\lambda}\,,
\\
\begin{split}
\omega^{\rho}_{\beta\gamma}&\Doteq\r^{\rho}_{\mu}dr^{\mu}_{\beta\gamma}
-\r^{\rho}_{\mu}r^{\mu}_{\beta\nu}\r^{\nu}_{\lambda}dr^{\lambda}_{\gamma}
-\r^{\rho}_{\mu}r^{\mu}_{\gamma\nu}\r^{\nu}_{\lambda}dr^{\lambda}_{\beta}\,,
\\
&\qquad+\r^{\rho}_{\mu}r^{\mu}_{\beta\nu}\r^{\nu}_{\lambda}r^{\lambda}_{\gamma\iota}\r^{\iota}_{\sigma}dx^{\sigma}
+\r^{\rho}_{\mu}r^{\mu}_{\gamma\nu}\r^{\nu}_{\lambda}r^{\lambda}_{\beta\iota}\r^{\iota}_{\sigma}dx^{\sigma}
-\r^{\rho}_{\mu}r^{\mu}_{\nu\beta\gamma}\r^{\nu}_{\lambda}dx^{\lambda}\,.
\end{split}
\end{align*}
To span the cotangent space to the manifold $H''$ one more string of forms drops in
(we present their definition through a recurcive relation, which appears
more simple for any order as well),
$$
r^{\rho}_{\delta}\omega^{\delta}_{\iota\beta\gamma}=dr^{\rho}_{\iota\beta\gamma}
-r^{\rho}_{\mu\iota}\omega^{\mu}_{\beta\gamma}-r^{\rho}_{\mu\beta}\omega^{\mu}_{\iota\gamma}
-r^{\rho}_{\mu\gamma}\omega^{\mu}_{\beta\iota}-r^{\rho}_{\mu\iota\beta}\omega^{\mu}_{\gamma}
-r^{\rho}_{\mu\gamma\beta}\omega^{\mu}_{\iota}-r^{\rho}_{\mu\iota\gamma}\omega^{\mu}_{\beta}
-r^{\rho}_{\mu\iota\beta\gamma}\omega^{\mu}\,.
$$
These differential forms constitute a global object, intrinsically defined
in \cite{KobayashiCanonicalforms}.

Rather then proceed with the cross-section $f:\:T'M\to T''M$, one could wish to
develop some calculus on the corresponding principle bundles.
By the commutative diagram
\begin{equation}
\CD
H''\times{\mathbb{V}}''@>\rho''>>T''M\\
@A\Phi AA@AAfA\\
H'\times{\mathbb{V}}'@>>\rho'>T'M
\endCD
\label{3}
\end{equation}
the mapping $\Phi$ has to be both an equivariant one and a cross-section.
The typical fibre ${\mathbb{V}}''=J_{\sssize 3}({\mathbb{R}}_{\sssize 0};\, {\mathbb{R}}^{n}_{\sssize 0})
=\{U^{\rho},\dot U^{\rho},\ddot U^{\rho}\}$ undergoes such a left action
of the group
$GL''(n)=\tilde J_{\sssize 3}({\mathbb{R}}^{n}_{\sssize 0};\, {\mathbb{R}}^{n}_{\sssize 0})
=\{s^{\rho}_{\beta},s^{\rho}_{\beta\gamma},s^{\rho}_{\beta\gamma\delta}\}$,
that the quotient map $\rho''(r.s,s^{\sssize-1}.U)=\rho''(r,U)$ is described
explicitly by
\begin{align*}
u^{\rho}&=r^{\rho}_{\mu}U^{\mu}\,,\\
\dot u^{\rho}&=r^{\rho}_{\mu}\dot U^{\mu}+r^{\rho}_{\mu\nu}U^{\mu}U^{\nu}\,,\\
\ddot u^{\rho}&=r^{\rho}_{\mu}\ddot U^{\mu}+3r^{\rho}_{\mu\nu}\dot U^{\mu}U^{\nu}
+r^{\rho}_{\mu\nu\lambda}U^{\mu}U^{\nu}U^{\lambda}\,.
\end{align*}
A tangent vector to the product manifold $H''\times{\mathbb{V}}''$,
$$
\pmb{{\frak a}}=a^{\rho}\p{x^{\rho}}+a^{\rho}_{\mu}\p{r^{\rho}_{\mu}}
+a^{\rho}_{\mu\nu}\p{r^{\rho}_{\mu\nu}}
+a^{\rho}_{\mu\nu\lambda}\p{r^{\rho}_{\mu\nu\lambda}}
+\upsilon^{\rho}\p{U^{\rho}}+\dot\upsilon^{\rho}\p{\dot U^{\rho}}
+\ddot\upsilon^{\rho}\p{\ddot U^{\rho}}\,,
$$
is vertical with respect to the projection $\rho''$ if and only if
\begin{gather*}
a^{\rho}=0\,,
\\
r^{\rho}_{\mu}\upsilon^{\mu}+a^{\rho}_{\mu}U^{\mu}=0\,,\\
r^{\rho}_{\mu}\dot\upsilon^{\mu}+a^{\rho}_{\mu}\dot U^{\mu}
+a^{\rho}_{\mu\nu}U^{\mu}U^{\nu}-2r^{\rho}_{\mu\nu}\r^{\mu}_{\lambda}a^{\lambda}_{\iota}U^{\nu}U^{\iota}=0\,,\\
\begin{split}
r^{\rho}_{\mu}\ddot\upsilon^{\mu}+a^{\rho}_{\mu}\ddot U^{\mu}+3a^{\rho}_{\mu\nu}\dot U^{\mu}U^{\nu}
+a^{\rho}_{\mu\nu\lambda}U^{\mu}U^{\nu}U^{\lambda}
\qquad\qquad\qquad\qquad\\
-3r^{\rho}_{\mu\nu}\r^{\mu}_{\lambda}U^{\nu}(a^{\lambda}_{\iota}\dot U^{\iota}
	+a^{\lambda}_{\iota\sigma}U^{\iota}U^{\sigma}
	-2r^{\lambda}_{\iota\sigma}\r^{\iota}_{\beta}a^{\beta}_{\delta}U^{\sigma}U^{\delta})
\qquad&\\
\qquad\qquad
-3r^{\rho}_{\mu\nu}\r^{\nu}_{\lambda}a^{\lambda}_{\iota}\dot U^{\mu}U^{\iota}
-3r^{\rho}_{\mu\nu\lambda}\r^{\lambda}_{\iota}a^{\iota}_{\sigma}U^{\mu}U^{\nu}U^{\sigma}
&=0\,.
\end{split}
\end{gather*}

If the map $\Phi$ is equivariant, then its Lie derivative with respect to
an arbitrary pair of vertical vector fields $\pmb{{\frak a}}^{\boldsymbol\prime\boldsymbol\prime}$
and $\pmb{{\frak a}}^{\boldsymbol\prime}$ on the manifolds $H''\times{\mathbb{V}}''$ and $H'\times{\mathbb{V}}'$
is zero (i.e. vector fields $\pmb{{\frak a}}^{\boldsymbol\prime\boldsymbol\prime}$
and $\pmb{{\frak a}}^{\boldsymbol\prime}$ are $\Phi$-related):
$$
T\Phi\,\circ\,\pmb{{\frak a}}^{\boldsymbol\prime\boldsymbol\prime}\,=\,\pmb{{\frak a}}^{\boldsymbol\prime}\,\circ\,\Phi\,.
$$
The map $\eta^{\sssize-1}\Phi:\:H''\times{\mathbb{V}}'\to H''\times{\mathbb{V}}''$,
induced by the projection $\eta:\:H''\to H'$,
$$
\eta^{\sssize-1}\Phi(r,U)=\Phi(\eta r,U)\,,
$$
is not fibred over the identity in $H''$. Nevertheless, there can always be found
an element $(\delta^{\rho}_{\mu},0,s^{\rho}_{\mu\nu\lambda})\in GL''(n)$
such that
\begin{equation}
r^{\rho}_{\sigma}s^{\sigma}_{\mu\nu\lambda}+r^{\rho}_{\mu\nu\lambda}
=\Phi^{\rho}_{\mu\nu\lambda}(x^{\beta};r^{\beta}_{\iota},r^{\beta}_{\iota\gamma},U^{\beta},\dot U^{\beta})\,.
\label{4}
\end{equation}
We define the fibred morphism $F$ over the identity in $H''$ as a family of
cross-sections $F(r)$ of the fibration $\pi:\:{\mathbb{V}}''\to{\mathbb{V}}'$ by means of
$$
F^{\rho}(x^{\beta};r^{\beta}_{\mu},r^{\beta}_{\mu\nu},r^{\beta}_{\mu\nu\lambda},U^{\beta},\dot U^{\beta})
=\ddot\Phi^{\rho}+s^{\rho}_{\iota\sigma\gamma}U^{\iota}U^{\sigma}U^{\gamma}\,,
$$
with $s^{\rho}_{\iota\sigma\gamma}$ defined from~(\ref{4}).
Then, by virtue of $\Phi$ equivariant, for every vertical vector field
$\pmb{{\frak a}}$ on the manifold $H''\times{\mathbb{V}}''$ we have
\begin{equation}
TF\,\circ\,(\,T(id)\times T\pi)\,\circ\,\pmb{{\frak a}}\,
=\,
\pmb{{\frak a}}
\,\circ\,F\,\circ\,(id\times\pi)\,,
\label{5}
\end{equation}
as can be seen from the next diagram by an appropriate `diagram chasing',
$$
\CD
TH''\times T{\mathbb{V}}''@<TF<<TH''\times T{\mathbb{V}}'\\
@| @|\\
TH''\times T{\mathbb{V}}''@>T(id)\times T\pi>>TH''\times T{\mathbb{V}}'\\
@A\pmb{{\frak a}}AA @AA(T(id)\times T\pi)\,\circ\,\pmb{{\frak a}}A\\
H''\times {\mathbb{V}}''@>id\times \pi>>H''\times{\mathbb{V}}'\\
@| @|\\
H''\times {\mathbb{V}}''@<F<<H''\times{\mathbb{V}}'
\endCD
$$

The kernel of $T\rho''$ annuls the following one-forms:
\begin{equation}
\begin{alignedat}{2}
&\omega^{\rho}\,,\\
&\triangle U^{\rho}&&\Doteq dU^{\rho}+U^{\mu}\omega^{\rho}_{\mu}\,,\\
&\triangle \dot U^{\rho}&&\Doteq d\dot U^{\rho}+\dot U^{\mu}\omega^{\rho}_{\mu}
+U^{\mu}U^{\nu}\omega^{\rho}_{\mu\nu}\,,\\
&\triangle \ddot U^{\rho}&&\Doteq d\ddot U^{\rho}+\ddot U^{\mu}\omega^{\rho}_{\mu}
+3U^{\mu}\dot U^{\nu}\omega^{\rho}_{\mu\nu}+U^{\mu}U^{\nu}U^{\lambda}\omega^{\rho}_{\mu\nu\lambda}\,.
\end{alignedat}
\label{6}
\end{equation}
By calculating the Lie derivative of the differential forms~(\ref{6}) along a vertical
vector field it turns out that the exterior differential system, generated by~(\ref{6}) is invariant under the action of the group $GL''(n)$ upon the manifold
$H''\times{\mathbb{V}}''$.

{\it If the functions $F^{\rho}$ satisfy~(\ref{5}), then the differential
forms
$$
\triangle F^{\rho}\Doteq dF^{\rho}+\ddot U^{\mu}\omega^{\rho}_{\mu}
+3U^{\mu}\dot U^{\nu}\omega^{\rho}_{\mu\nu}+U^{\mu}U^{\nu}U^{\lambda}\omega^{\rho}_{\mu\nu\lambda}
$$
expand into the differential forms~(\ref{6}) alone:}
\begin{equation}
\triangle F^{\rho}
=F^{\rho}{}^{\sssize2}_{\mu}\triangle\dot U^{\mu}
+F^{\rho}{}^{\sssize1}_{\mu}\triangle U^{\mu}
+F^{\rho}{}^{\sssize0}_{\mu}\omega^{\mu}\,.
\label{7}
\end{equation}

The concept of second order connection involves the quotient manifold
$\Gamma H'=TH'/GL'(n)$ with respect to the standard action of the group $GL'(n)$:
$$
a^{\rho}\p{x^{\rho}}+a^{\rho}_{\mu}\p{r^{\rho}_{\mu}}
+a^{\rho}_{\mu\nu}\p{r^{\rho}_{\mu\nu}}\mapsto
a^{\rho}\p{x^{\rho}}+s^{\nu}_{\mu}a^{\rho}_{\nu}\p{r^{\rho}_{\mu}}
+(s^{\lambda}_{\mu\nu}a^{\rho}_{\lambda}
+s^{\lambda}_{\mu}s^{\iota}_{\nu}a^{\rho}_{\lambda\iota})\p{r^{\rho}_{\mu\nu}}\,.
$$
The manifold $\Gamma H'$ projects onto the manifold $T''M$ by means of the following
mapping of $TH'$, compatible with this action:
$$
(a^{\rho},a^{\rho}_{\mu},a^{\rho}_{\mu\nu})\mapsto
(a^{\rho},a^{\rho}_{\mu}\r^{\mu}_{\nu}a^{\nu},
a^{\rho}_{\lambda\iota}\r^{\lambda}_{\mu}\r^{\iota}_{\nu}a^{\mu}a^{\nu}
\!\!-\!a^{\rho}_{\delta}\r^{\delta}_{\sigma}r^{\sigma}_{\lambda\iota}\r^{\lambda}_{\mu}\r^{\iota}_{\nu}a^{\mu}a^{\nu}
\!\!+\!a^{\rho}_{\iota}\r^{\iota}_{\lambda}a^{\lambda}_{\mu}\r^{\mu}_{\nu}a^{\nu})\,.
$$
\begin{definition}[\cite{EvtushikTretyakov_translation}]
A second order connection is given by a map $\gamma:\:T'M\to\Gamma H'$,
which is identity in $TM$.
\end{definition}
By means of the commutative diagram
$$
\CD
\Gamma H'@>>>T''M\\
@A\gamma AA@AA\widehat\gamma A\\
T'M@=T'M
\endCD
$$
every connection $\gamma$ defines a morphism of manifolds $\widehat\gamma$.
The very similar way to~(\ref{7}) this map $\gamma$
may be described through the structure equations,
\begin{alignat*}{2}
&\triangle\Gamma^{\rho}_{\beta}
&&=\Gamma^{\rho}_{\beta}{}^{\sssize2}_{\mu}\triangle\dot U^{\mu}
+\Gamma^{\rho}_{\beta}{}^{\sssize1}_{\mu}\triangle U^{\mu}
+\Gamma^{\rho}_{\beta}{}^{\sssize0}_{\mu}\omega^{\mu}\,,\\
&\triangle\Gamma^{\rho}_{\beta\gamma}
&&=\Gamma^{\rho}_{\beta\gamma}{}^{\sssize2}_{\mu}\triangle\dot U^{\mu}
+\Gamma^{\rho}_{\beta\gamma}{}^{\sssize1}_{\mu}\triangle U^{\mu}
+\Gamma^{\rho}_{\beta\gamma}{}^{\sssize0}_{\mu}\omega^{\mu}\,,
\end{alignat*}
where the differential forms $\triangle\Gamma^{\rho}_{\beta}$, $\triangle\Gamma^{\rho}_{\beta\gamma}$
are build up from the differentials of the functions ($\Gamma^{\rho}_{\beta},\, \Gamma^{\rho}_{\beta\gamma}$),
which represent the map $\gamma$, as follows:
\begin{alignat*}{2}
&\triangle\Gamma^{\rho}_{\beta}&&\Doteq d\Gamma^{\rho}_{\beta}
+\Gamma^{\mu}_{\beta}\omega^{\rho}_{\mu}
-\Gamma^{\rho}_{\mu}\omega^{\mu}_{\beta}
+U^{\mu}\omega^{\rho}_{\beta\mu}\,,\\
&\triangle\Gamma^{\rho}_{\beta\gamma}&&\Doteq d\Gamma^{\rho}_{\beta\gamma}
+\Gamma^{\mu}_{\beta\gamma}\omega^{\rho}_{\mu}
-\Gamma^{\rho}_{\beta\mu}\omega^{\mu}_{\gamma}
-\Gamma^{\rho}_{\mu\gamma}\omega^{\mu}_{\beta}
+\Gamma^{\mu}_{\beta}\omega^{\rho}_{\mu\gamma}
+\Gamma^{\mu}_{\gamma}\omega^{\rho}_{\mu\beta}
-\Gamma^{\rho}_{\mu}\omega^{\mu}_{\beta\gamma}
+U^{\mu}\omega^{\rho}_{\beta\gamma\mu}\,.
\end{alignat*}

Connection $\gamma$ is called {\it stable\/} if it projects onto the identity
in $T'M$. In this case the morphism $\widehat\gamma$ is a cross-section and
thus defines a third-order differential equation of type~(\ref{1}).

To discuss a weaker condition of a quasi-stable connection, we recall that
the group $GL'(1)$ acts on the right upon the space $T'M$ by parameter transformations.
The generators are:
\begin{align*}
\pmb{{\frak p}}_{\sssize\bold1}&=u^{\rho}\p{u^{\rho}}+2\dot u^{\rho}\p{\dot u^{\rho}}\,,\\
\pmb{{\frak p}}_{\sssize\bold2}&=u^{\rho}\p{\dot u^{\rho}}\,.
\end{align*}
The quotient space with respect to this action is the manifold $C'M$ of contact
elements, locally arranged as ${\mathbb{R}}\times T'{\mathbb{R}}^{n-1}$. Connection $\gamma$
is said to be {\it quasi-stable\/} if it projects onto the identity in $C'M$.

In case of quasi-stable connection it is possible to introduce\cite{EvtushikKenerovo,EvtushikTretyakov_translation} the
notion of parallel transport in such a way, that the autoparallel curves of this
connection will be described in the typical fibre ${\mathbb{V}}''$ of the fibre bundle $T''M$
by means of the equation
\begin{equation}
\ddot U^{\rho}=\Gamma^{\rho}_{\mu}\dot U^{\mu}+\Gamma^{\rho}_{\mu\nu}U^{\mu}U^{\nu}
+\lambda^{\sssize(2)}\dot U^{\rho}+\lambda^{\sssize(1)}U^{\rho}\,.
\label{9}
\end{equation}
If the quasi-stable connection $\gamma$ is stable, the
functions $\lambda^{\sssize(1)}$ and $\lambda^{\sssize(2)}$ both vanish.

Not every equation~(\ref{1}) can be rearranged in the form~(\ref{9}).
The crucial idea consists in applying a somewhat technical trick of reparametrization.
If the map $f$ in~(\ref{1}) or~(\ref{3}) defines in the consistent manner
some equation on the manifold $C'M$, and if we think of $f$ as of a vector field $\pmb{{\frak f}}$
on the manifold $T'M$ by the inclusion $T''M\hookrightarrow TT'M$, then the Lie brackets
$
[\pmb{{\frak p}}_{\sssize1},\pmb{{\frak f}}]$ and $[\pmb{{\frak p}}_{\sssize2},\pmb{{\frak f}}]
$
differ from a multiply of $\pmb{{\frak f}}$ by some vertical field with respect to the projection
\begin{equation}
\wp:\:T'M\to C'M\,.
\label{10}
\end{equation}
In fact, a stronger condition holds:
$$
\left\{
\begin{aligned}
(T\wp)\,[\pmb{{\frak p}}_{\sssize\bold1},\pmb{{\frak f}}]&=(T\wp)\,\pmb{{\frak f}}\\
(T\wp)\,[\pmb{{\frak p}}_{\sssize\bold2},\pmb{{\frak f}}]&=0
\end{aligned}
\right.
$$
In terms of the representation~(\ref{7}) the above condition amounts
to the following two equations with Lagrange multiplies $\mu$ and $\lambda$,
\begin{align}
3F^{\rho}-F^{\rho}{}^{\sssize1}_{\iota}U^{\iota}
-2F^{\rho}{}^{\sssize2}_{\iota}\dot U^{\iota}&=3\mu U^{\rho}
\label{11}
\\
3\dot U^{\rho}-F^{\rho}{}^{\sssize2}_{\iota}U^{\iota}&=3\lambda U^{\rho}\,.
\label{12}
\end{align}
The multipliers $\mu$ and $\lambda$ are functions on the manifold $H'\times{\mathbb{V}}'$,
and in order them to represent some well-defined functions on the manifold
$T'M$, they both have to satisfy the condition of $GL'(n)$-invariance of the type
$
\langle\pmb{{\frak a}},d\mu\rangle=0
$
for any $\rho'$-vertical vector $\pmb{{\frak a}}$, which amounts to the following system of partial
differential equations:
\begin{equation}
\begin{gathered}
\pp{\mu}{\dot U^{\rho}}U^{\beta}U^{\gamma}=r^{\nu}_{\rho}\pp{\mu}{r^{\nu}_{\beta\gamma}}\\
\pp{\mu}{U^{\rho}}U^{\beta}-\pp{\mu}{\dot U^{\rho}}\dot U^{\beta}
=2r^{\delta}_{\rho\nu}\pp{\mu}{r^{\delta}_{\beta\nu}}+r^{\delta}_{\rho}\pp{\mu}{r^{\delta}_{\beta}}\,.
\end{gathered}
\label{13}
\end{equation}
\begin{definition}[\cite{EvtushikTretyakov_translation}]
The equation~(\ref{1}) is {\it reducible\/} if (\ref{11}, \ref{12}) holds for the
representation~(\ref{7}). It will be called {\it strictly reducible\/}
if both $\mu=0$ and $\lambda=0$.
\end{definition}

Consider now a (second order nonlinear) connection,
the coefficients  $\Gamma^{\rho}_{\beta}$, $\Gamma^{\rho}_{\beta\gamma}$
of which are constructed from the coefficients of the first-order prolongation
of the differential system~(\ref{7}),
$$
dF^{\rho}{}^{\sssize2}_{\beta}+F^{\mu}{}^{\sssize2}_{\beta}\omega^{\rho}_{\mu}
-F^{\rho}{}^{\sssize2}_{\mu}\omega^{\mu}_{\beta}+3U^{\mu}\omega^{\rho}_{\beta\mu}
=F^{\rho}{}^{\sssize2}_{\beta}{}^{\sssize0}_{\mu}\omega^{\mu}+F^{\rho}{}^{\sssize2}_{\beta}{}^{\sssize1}_{\mu}
\triangle U^{\mu}+F^{\rho}{}^{\sssize2}_{\beta}{}^{\sssize2}_{\mu}\triangle \dot U^{\mu}\,,
$$
according to the following prescription:
\begin{equation}
\begin{alignedat}{2}
&\Gamma ^{\rho}_{\beta}&&={\tsize\frac13}F^{\rho}{}^{\sssize2}_{\beta}\,;\\
&\Gamma ^{\rho}{}_{\beta\gamma}&&={\tsize\frac12}(\Pi^{\rho}_{\beta\gamma}+\Pi^{\rho}_{\gamma\beta})\,,
\quad\text{where} \\
&\Pi^{\rho}_{\beta\gamma}&&={\tsize\frac13}F^{\rho}{}^{\sssize2}_{\beta}{}^{\sssize0}_{\gamma}
+{\tsize\frac19}(F^{\rho}{}^{\sssize2}_{\beta}{}^{\sssize2}_{\mu}F^{\mu}{}^{\sssize1}_{\gamma}
+F^{\rho}{}^{\sssize2}_{\beta}{}^{\sssize1}_{\mu}F^{\mu}{}^{\sssize2}_{\gamma})
+{\tsize\frac2{27}}F^{\rho}{}^{\sssize2}_{\beta}{}^{\sssize2}_{\mu}F^{\mu}{}^{\sssize2}_{\nu}F^{\nu}{}^{\sssize2}_{\gamma}\,.
\end{alignedat}
\label{14}
\end{equation}

Let us agree to call the connection, constructed accordingly to the formulae~(\ref{14}), as one, {\it attached\/} to the differential equation~(\ref{1})
\begin{proc}[\cite{EvtushikKenerovo}]
The connection, attached to a reducible differential equation, is quasi-stable.
The equation~(\ref{9}) of the autoparallel curves  of the connection,
attached to a reducible differential equation,
coincides with the initial equation~(\ref{1}).
If~(\ref{1}) is strictly reducible, then the attached connection is stable.
\end{proc}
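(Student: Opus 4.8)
The plan is to substitute the coefficients~(\ref{14}) of the attached connection into the autoparallel equation~(\ref{9}), cancel what cancels, and check that the residue is exactly the function $F^{\rho}$ of~(\ref{1}) together with a combination of $\dot U^{\rho}$ and $U^{\rho}$ that is forced to vanish precisely in the strictly reducible case. All of this is done upstairs, on $H''\times{\mathbb{V}}'$, where the functions $F^{\rho}{}^{\sssize2}_{\beta}$, their prolongations $F^{\rho}{}^{\sssize2}_{\beta}{}^{\sssize j}_{\mu}$ ($j=0,1,2$) and the multipliers $\mu,\lambda$ all live; the descent to $T'M$ and $C'M$ is carried out at the very end, via the invariance system~(\ref{13}) and its analogue for the $\Gamma$'s.

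The first and decisive step is to record the algebraic identities obtained by prolonging the structure equation~(\ref{7}). Taking the exterior derivative of~(\ref{7}), replacing $d\omega^{\rho}$, $d\omega^{\rho}_{\mu}$, $d\omega^{\rho}_{\mu\nu}$ by the intrinsic structure relations of $H''$ (cf.~\cite{KobayashiCanonicalforms}) and $d(\triangle U^{\mu})$, $d(\triangle\dot U^{\mu})$ by what~(\ref{6}) dictates, and using the displayed prolongation of~(\ref{7}) to dispose of $dF^{\rho}{}^{\sssize2}_{\beta}$, one reads off, upon collecting the coefficients of the independent $2$-forms of the adapted coframe, expressions for $F^{\rho}{}^{\sssize2}_{\beta}{}^{\sssize0}_{\mu}$, $F^{\rho}{}^{\sssize2}_{\beta}{}^{\sssize1}_{\mu}$, $F^{\rho}{}^{\sssize2}_{\beta}{}^{\sssize2}_{\mu}$ in terms of $F^{\rho}{}^{\sssize0}_{\mu}$, $F^{\rho}{}^{\sssize1}_{\mu}$, $F^{\rho}{}^{\sssize2}_{\mu}$ and their derivatives along the vertical directions. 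Feeding these into $\Gamma^{\rho}_{\mu}\dot U^{\mu}+\Gamma^{\rho}_{\mu\nu}U^{\mu}U^{\nu}$ (the symmetrisation in $\Gamma^{\rho}_{\beta\gamma}=\tfrac12(\Pi^{\rho}_{\beta\gamma}+\Pi^{\rho}_{\gamma\beta})$ being immaterial against the symmetric $U^{\mu}U^{\nu}$) and then eliminating $F^{\rho}{}^{\sssize1}_{\iota}U^{\iota}$ and $F^{\rho}{}^{\sssize2}_{\iota}U^{\iota}$ by the reducibility relations~(\ref{11}) and~(\ref{12}), the whole sum telescopes --- this is exactly what the numerical factors $\tfrac13,\tfrac19,\tfrac2{27}$ in~(\ref{14}) are tuned for --- down to $F^{\rho}+\lambda^{\sssize(2)}\dot U^{\rho}+\lambda^{\sssize(1)}U^{\rho}$, where $\lambda^{\sssize(1)}$, $\lambda^{\sssize(2)}$ turn out to be universal linear combinations of $\mu$, $\lambda$ and their derivatives along $\pmb{{\frak p}}_{\sssize\bold1}$, $\pmb{{\frak p}}_{\sssize\bold2}$. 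Comparing with~(\ref{9}) yields the middle assertion and simultaneously identifies its Lagrange multipliers with those combinations; in particular, for a strictly reducible equation $\mu\equiv\lambda\equiv0$, hence $\lambda^{\sssize(1)}\equiv\lambda^{\sssize(2)}\equiv0$, so~(\ref{9}) reads $\ddot U^{\rho}=\Gamma^{\rho}_{\mu}\dot U^{\mu}+\Gamma^{\rho}_{\mu\nu}U^{\mu}U^{\nu}$ and $\gamma$ projects onto the identity already in $T'M$: the attached connection is stable.

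For a merely reducible equation one still has to verify quasi-stability, i.e. that $\gamma$ projects onto the identity in $C'M$. Conceptually, reducibility in the form $(T\wp)[\pmb{{\frak p}}_{\sssize\bold1},\pmb{{\frak f}}]=(T\wp)\pmb{{\frak f}}$, $(T\wp)[\pmb{{\frak p}}_{\sssize\bold2},\pmb{{\frak f}}]=0$ says exactly that $\pmb{{\frak f}}$ --- hence its first prolongation, hence the connection~(\ref{14}) manufactured from that prolongation --- descends to a well-defined object on $C'M$. Concretely, I would differentiate~(\ref{11}) and~(\ref{12}) along $\pmb{{\frak p}}_{\sssize\bold1}$ and $\pmb{{\frak p}}_{\sssize\bold2}$, invoke the invariance system~(\ref{13}) for $\mu$ and $\lambda$ together with the $GL'(n)$-invariance required of $\Gamma^{\rho}_{\beta}$, $\Gamma^{\rho}_{\beta\gamma}$, and observe that the combinations thus produced are precisely the homogeneity relations characterising a $\wp$-projectable connection. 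This is the only place where the hypothesis ``reducible'' is used in its full strength.

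The main obstacle is the prolongation step: carrying out the exterior differentiation of~(\ref{7}) without error and keeping track of which among the second-generation coefficients $F^{\rho}{}^{\sssize2}_{\beta}{}^{\sssize j}_{\mu}$ actually survive the contraction with $U^{\mu}U^{\nu}$ and the substitution of~(\ref{11})--(\ref{12}); the cancellations there hinge delicately on the coefficients in~(\ref{14}). Once that single identity is established, the identification of $\lambda^{\sssize(1)},\lambda^{\sssize(2)}$ and the quasi-stability check are a matter of routine bookkeeping, and the strictly reducible case then follows at once.
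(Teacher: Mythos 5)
First, a point of comparison: the paper offers no proof of this Proposition at all --- it is imported verbatim from \cite{EvtushikKenerovo} --- so your sketch cannot be matched against an in-paper argument; it can only be checked for internal coherence and against the formulas the paper records immediately after the statement. On that score your overall route (substitute the coefficients~(\ref{14}) into~(\ref{9}), use the prolongation of~(\ref{7}) together with the reducibility relations~(\ref{11})--(\ref{12}) to telescope the sum down to $F^{\rho}+\lambda^{\sssize(2)}\dot U^{\rho}+\lambda^{\sssize(1)}U^{\rho}$) is the natural one and is consistent with what the paper displays: $\lambda^{\sssize(2)}=2\lambda$ and an explicit formula for $\lambda^{\sssize(1)}$. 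One inaccuracy: you assert that $\lambda^{\sssize(1)},\lambda^{\sssize(2)}$ come out as \emph{linear} combinations of $\mu$, $\lambda$ and their derivatives; the paper's own formula for $\lambda^{\sssize(1)}$ contains the terms $-2\lambda^{2}$ and $-\lambda(\lambda^{\sssize1}_{\nu}U^{\nu}+\tfrac23\lambda^{\sssize2}_{\nu}F^{\nu}{}^{\sssize2}_{\iota}U^{\iota})$, so the dependence is quadratic. This does not damage the strictly reducible case, since every term still vanishes when $\mu\equiv\lambda\equiv0$.

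The genuine soft spot is the quasi-stability step. Quasi-stability is not the statement that the connection \emph{descends to a well-defined object} on $C'M$ (that is a matter of $GL'(n)$- and reparametrization-equivariance); it is the statement, per the paper's definition, that $\gamma$ \emph{projects onto the identity} in $C'M$. Your appeal to the descent of $\pmb{{\frak f}}$ and of its prolongation therefore establishes the wrong property. The argument you actually need is the one your first paragraph already supplies: once the autoparallel equation is shown to read $\ddot U^{\rho}=F^{\rho}+\lambda^{\sssize(2)}\dot U^{\rho}+\lambda^{\sssize(1)}U^{\rho}$, the correction $\lambda^{\sssize(2)}\dot U^{\rho}+\lambda^{\sssize(1)}U^{\rho}$ lies pointwise in the span of the generators $\pmb{{\frak p}}_{\sssize\bold1}$, $\pmb{{\frak p}}_{\sssize\bold2}$ of the fibres of~(\ref{10}), hence is annihilated by $T\wp$, and the autoparallel field projects to $C'M$ onto the same field as the reducible equation~(\ref{1}) --- which is exactly the required identity condition. (The invariance system~(\ref{13}) is still needed, but only to guarantee that $\mu$ and $\lambda$, and with them $\lambda^{\sssize(1)}$ and $\lambda^{\sssize(2)}$, are well defined on $T'M$.) With that substitution your sketch is sound, modulo the heavy but routine verification of the telescoping identity that you yourself flag as the main computational obstacle.
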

The functions $\lambda^{\sssize(1)}$ and $\lambda^{\sssize(2)}$ in~(\ref{9})
are expressed through the functions $\mu$ from~(\ref{11}) and
$\lambda$ from~(\ref{12}) in terms
of the coefficients of the differential $d\lambda$,
$$
d\lambda=\lambda^{\sssize0}_{\mu}\omega^{\mu}+\lambda^{\sssize1}_{\mu}\triangle U^{\mu}
+\lambda^{\sssize2}_{\mu}\triangle\dot U^{\mu}\,,
$$
according to the formulae below:
\begin{gather*}
\lambda^{\sssize(1)}=\lambda^{\sssize0}_{\nu}U^{\nu}+\lambda^{\sssize1}_{\nu}\dot U^{\nu}
\lambda^{\sssize2}_{\nu}\ddot U^{\nu}+\mu(1-\lambda^{\sssize2}_{\nu}U^{\nu})
-\lambda(\lambda^{\sssize1}_{\nu}U^{\nu}
+{\tsize\frac23}\lambda^{\sssize2}_{\nu}F^{\nu}{}^{\sssize2}_{\iota}U^{\iota})-2\lambda^{2}\,,
\\\lambda^{\sssize(2)}=2\lambda\,.
\end{gather*}

In view of~(\ref{13}),
$$
\lambda^{\sssize1}_{\rho}=\pp{\lambda}{U^{\rho}}\,,
\qquad\lambda^{\sssize2}_{\rho}=\pp{\lambda}{\dot U^{\rho}}\,.
$$
\section{Euclidean space. Variational equation.}
As declared, we look for a third order differential equation in \hbox{(pseudo-)} \!Euclidean
space ${\bold E}^{\sssize 3}$, which would be derivable from a Lagrangian.
The dimension of the space is {\it three}. As mentioned at the very beginning
of the present contribution, we cannot expect such equation to exist in the form,
solved with respect to the highest (i.e of the third order) derivatives.
So we shall first settle down on the manifold
$$
{\mathbb{R}}\times T'{\bold E}^{\sssize 2}
$$
and afterwards go all the way back to the manifold $T'{\bold E}^{\sssize 3}$ along the
projection of~(\ref{10}), which in the canonical coordinates is so expressed:
\begin{align*}
\text{\ssm t}\circ\wp&=x^{\sssize0}\\
\text{\ssm x}^{a}\circ\wp&=x^{a}\\
\text{\ssm v}^{a}\circ\wp&=\frac{u^{a}}{u^{\sssize0}} \\
\text{\ssm v}'{}^{a}\circ\wp&=\frac{\dot u^{a}}{u_{\sssize0}{}^{2}}
	-\frac{\dot u_{\sssize0}}{u_{\sssize0}{}^{3}}u^{a}\,.
\end{align*}

Let us concentrate on a system of two third-order ordinary differential equations
\begin{equation}
{\text{\ssm E}}_{a}=0\,.
\label{15}
\end{equation}
We introduce a vector valued differential one-form
\begin{equation}
\boldsymbol\epsilon={\text{\ssm E}}_{a}\,{{d}}{\text{\ssm x}}^{a}\otimes{{d}}\text{\ssm t}\,,
\label{16}
\end{equation}
where the expressions ${\text{\ssm E}}_{a}$ are called the Euler-Poisson
expressions.\footnote{This is an alternative way to interpret the notion of
the Euler morphism, the latter having been considered by {\smc Kol\'a\v r}
in \cite{KolarNoveMesto}.}
Applying the general criterion of \cite{TulczyjewResolution} for an arbitrary system
of differential equations to be a system of Euler-Poisson equations, it was
established in \cite{Theses} that the vector expression ${\text{\ssb E}}=\{{\text{\ssm E}}_{a}\}$
in~(\ref{15}) must have the shape
\begin{equation}
{\text{\ssb E}}={\mathbb{A}}{\,\boldkey.\,}{\text{\ssb v}}^{\boldsymbol\prime\boldsymbol\prime}{\,+\,}({\text{\ssb v}}^{\boldsymbol\prime}{\!\boldkey.\,}{\boldsymbol\partial}_{\pmb{\text{\sssm v}}})\,
{\mathbb{A}}{\,\boldkey.\,}{\text{\ssb v}}^{\boldsymbol\prime}{\,+\,}{\mathbb{B}}{\,\boldkey.\,}{\text{\ssb v}}^{\boldsymbol\prime}{\,+\,}{\text{\ssb c}}\,,
\label{17}
\end{equation}
where  the skew-symmetric matrix ${\mathbb{A}}$, the matrix ${\mathbb{B}}$, and the column vector
${\text{\ssb c}}$ depend on the variables $\text{\ssm t}$, ${\text{\ssb x}}$, ${\text{\ssb v}}={d{\text{\ssb x}}}/d\text{\ssm t}$,
and satisfy the following system of partial differential equations in $\text{\ssm t}$, ${\text{\ssm x}}^a$,
and ${\text{\ssm v}}^a$ \cite{Theses,SovPhys30_6}
\begin{equation}
\begin{gathered}
       \partial_{_{_{_{{\text{\sssm v}}}}}}{\!}_{[a}{}{\text{\ssm A}}_{bc]}=0 \\[1\jot]
       2\,{\text{\ssm B}}_{[ab]}-3\,{\bold D}_{\sssize{\bold 1}}{\,}{\text{\ssm A}}_{ab}=0\\[1\jot]
       2\,\partial_{_{_{_{{\text{\sssm v}}}}}}{\!}_{[a}{}{\text{\ssm B}}_{b]c}
-4\,\partial_{_{_{_{{\text{\sssm x}}}}}}{\!}_{[a}{}{\text{\ssm A}}_{b]c}
+{\partial_{_{_{_{{\text{\sssm x}}}}}}{\!}_{c}}{\,}{\text{\ssm A}}_{ab}
+2\,{\bold D}_{\sssize{\bold 1}}{\,}{\partial_{_{_{_{{\text{\sssm v}}}}}}{\!}_{c}}{\,}{\text{\ssm A}}_{ab}=0 \\[1\jot]
       {\partial_{_{_{_{{\text{\sssm v}}}}}}{\!}_{(a}}{}{\text{\ssm c}}_{b)}
-{\bold D}_{\sssize{\bold 1}}{\,}{\text{\ssm B}}_{(ab)}=0\\[1\jot]
       2\,{\partial_{_{_{_{{\text{\sssm v}}}}}}{\!}_{c}}{\,}\partial_{_{_{_{{\text{\sssm v}}}}}}{\!}_{[a}{}{\text{\ssm c}}_{b]}
-4\,\partial_{_{_{_{{\text{\sssm x}}}}}}{\!}_{[a}{}{\text{\ssm B}}_{b]c}
+{{\bold D}_{\sssize{\bold 1}}}^{2}{\,}{\partial_{_{_{_{{\text{\sssm v}}}}}}{\!}_{c}}{\,}{\text{\ssm A}}_{ab}
+6\,{\bold D}_{\sssize{\bold 1}}{\,\,}\partial_{_{_{_{{\text{\sssm x}}}}}}{\!}_{[a}{}{\text{\ssm A}}_{bc]}=0 \\[1\jot]
       4\,\partial_{_{_{_{{\text{\sssm x}}}}}}{\!}_{[a}{}{\text{\ssm c}}_{b]}
-2\,{\bold D}_{\sssize{\bold 1}}{\,\,}\partial_{_{_{_{{\text{\sssm v}}}}}}{\!}_{[a}{}{\text{\ssm c}}_{b]}
-{{\bold D}_{\sssize{\bold 1}}}^{3}{\,}{\text{\ssm A}}_{ab}=0\,.
\end{gathered}
\label{18}
\end{equation}
In~(\ref{18}) ${\bold {}D_{\sssize1}}$ and farther below ${\bold {}D_{\sssize2}}$ denote the generators of the Cartan distribution,
\begin{align*}
{\bold {}D_{\sssize2}}={\text{\ssb v}}^{\boldsymbol\prime}{\!\boldkey.\,}{\boldsymbol\partial}_{\pmb{\text{\sssm v}}}
{\,+\,}&{\bold {}D_{\sssize1}}\,,\\
&{\bold {}D_{\sssize1}}=\partial_{\text{\sssm t}}{\,+\,}{\text{\ssb v}}{\,\boldkey.\,}{\boldsymbol\partial}_{\pmb{\text{\sssm x}}}\,.
\end{align*}

Let $\boldsymbol\theta_{\sssize{\bold 2}}$, $\boldsymbol\theta_{\sssize{\bold 3}}$ denote the canonical contact
forms
\begin{align*}
\boldsymbol\theta_{\sssize{\bold 3}}
=\frac\partial{\partial {{\text{\ssm v}}'}^{a}}\otimes({{d}}{\text{\ssm v}'}^{a}
-{\text{\ssm v}''}^{a}{{d}}\text{\ssm t}){\,+\,}&\boldsymbol\theta_{\sssize{\bold 2}}\,,  \\
&\boldsymbol\theta_{\sssize{\bold 2}}
=\frac\partial{\partial {\text{\ssm v}}^{a}}\otimes({{d}}\text{\ssm v}^{a}
-{\text{\ssm v}'}^{a}{{d}}\text{\ssm t})
{\,+\,}\frac\partial{\partial {\text{\ssm x}}^{a}}\otimes({{d}}\text{\ssm x}^{a}
-\text{\ssm v}^{a}{{d}}\text{\ssm t}) \,.
\end{align*}

Along with the differential form $\boldsymbol\epsilon$
we introduce another one, $\pmb{\underline{\epsilon}}$,
\begin{align*}
\pmb{\underline{\epsilon}}
=\text{\ssm A}_{ab}\,{{d}}\text{\ssm x}^{a}\otimes{{d}}{\text{\ssm v}'}^{b}
{\,+\,}&\text{\ssm k}_{a}\,{{d}}\text{\ssm x}^{a}\otimes{{d}}\text{\ssm t}\,,\\
&\text{\ssb k}
=({\text{\ssb v}}^{\boldsymbol\prime}{\!\boldkey.\,}{\boldsymbol\partial}_{\pmb{\text{\sssm v}}})\,
{\mathbb{A}}{\,\boldkey.\,}{\text{\ssb v}}^{\boldsymbol\prime}{\,+\,}{\mathbb{B}}{\,\boldkey.\,}{\text{\ssb v}}^{\boldsymbol\prime}{\,+\,}{\text{\ssb c}}\,.
\end{align*}
Exterior differential systems, generated by the forms $\boldsymbol\epsilon$ and
$\pmb{\underline{\epsilon}}$, are equivalent:
$$
\pmb{\underline{\epsilon}}-\boldsymbol\epsilon
=(\text{\ssm A}_{ab}\,{{d}}\text{\ssm x}^{a}\otimes{{d}}{\text{\ssm v}'}^{b})
\barwedge\boldsymbol\theta_{\sssize{\bold 3}}\,.
$$

Now it is time to put in the concept of {\it symmetry}. Let
\begin{equation}
\pmb{{\frak x}}=\tau\frac\partial{\partial\text{\ssm t}}{\,+\,}{{\frak x}^{a}}\frac\partial{\partial\text{\ssm x}^{a}}
\label{19}
\end{equation}
denote the generator of some local group of transformations of the manifold ${\mathbb{R}}\times{{\bold E}}^{\sssize2}$,
its successive prolongations to the space
$J_{s}({\mathbb{R}};{{\bold E}}^{\sssize2})\approx{\mathbb{R}}\times T^{s}{{\bold E}}^{\sssize2}$
denoted by $\pmb{{\frak x}_{s}}$:
$$\pmb{{\frak x}_{\sssize 2}}={\frak v}^{a}\frac\partial{\partial{\text{\ssm v}'}^{a}}
{\,+\,}\pmb{{\frak x}_{\sssize 1}}\,.
$$

The demand that the exterior differential system, generated by the vector
valued differential form $\pmb{\underline{\epsilon}}$, be invariant under the
infinitesimal transformation $\pmb{{\frak x}}$ incarnates into the following
equation\footnote{The notion of vector bundle valued exterior differential systems
invariance was introduced in \cite{Methods}, see also \cite{SymVectorForms}.}
\begin{equation}
{\bold L}(\pmb{{\frak x}_{\sssize 2}})(\pmb{\underline{\epsilon}})=\boldsymbol\Xi{\,\boldkey.\,}
\pmb{\underline{\epsilon}}{\,+\,}\boldsymbol\beta\barwedge\boldsymbol\theta_{\sssize{\bold 2}}\,,
\label{20}
\end{equation}
where the elements of the matrix
$\boldsymbol\Xi$
and the coefficients of the semi-basic $T^{\ast}{{\bold E}}^{\sssize2}$-valued one-form
$\boldsymbol\beta$
depend upon the variables $\text{\ssm t}$, $\text{\ssb x}$, $\text{\ssb v}$, and ${\text{\ssb v}}^{\boldsymbol\prime}$.
Both $\boldsymbol\Xi$ and $\boldsymbol\beta$ play the role of Lagrange multipliers.
Splitting of equation~(\ref{20})
with respect to independent differentials
${{d}}\text{\ssm t}$, ${{d}}\text{\ssm x}^{a}$, ${{d}}\text{\ssm v}^{a}$, and
${{d}}{\text{\ssm v}'}^{a}$, results in the following system of partial differential
equations
\begin{equation}
\begin{aligned}
{\bold L}(\pmb{{\frak x}_{\sssize 1}})\text{\ssm A}_{ab}&={\Xi_{a}}^{c}\,\text{\ssm A}_{cb}
-\text{\ssm A}_{ac}\frac\partial{\partial{\text{\ssm v}'}^{b}}{\frak v}^{c}\\[1\jot]
{\bold L}(\pmb{{\frak x}_{\sssize 2}})\text{\ssm k}_{a}&={\Xi_{a}}^{b}\,\text{\ssm k}_{b}
-\text{\ssm A}_{ab}{\bold {}D_{\sssize2}}{\frak v}^{b}
-\text{\ssm k}_{a}{\bold {}D_{\sssize1}}\tau\,.
\end{aligned}
\label{21}
\end{equation}

\subsection{Variational problem in parametric form.}
Consider for a moment an $r^{\text th}$-order variational problem in parametric form, set by a Lagrangian
$$
\ell(\zeta, x^{\rho}, u^{\rho}, \dots,{\overset{\sssize r-1}u}{}^{\rho}){{d}}\zeta
$$
on the space $J_{r}({\mathbb{R}};M)$. As long as we limit ourselves only
to the case of autonomous Euler-Poisson equations,
\begin{equation}
{\mathcal{E}}_{\rho}=0\,,\;\text{see~(\ref{2})},
\label{22}
\end{equation}
the differential form
\begin{equation}
\boldsymbol\varepsilon={\mathcal{E}}_{\rho}\,{{d}}x^{\rho}\otimes{{d}}\zeta
\label{23}
\end{equation}
may {\it globally\/} be deprived of the factor ${{d}}\zeta$, constituting
thus a {\it globally\/} defined $T^{\ast}M$-valued density
\begin{equation}
\boldkey e={\mathcal{E}}_{\rho}{{d}}x^{\rho}\,.
\label{24}
\end{equation}

Now the projection $\wp:T^{r}M\to C^{r}M$ can be employed to
generate an autonomous variational problem set over $T^{r}M$ from every one
variational problem over $C^{r}M$.

\begin{proc}
In terms of a local chart, if in~(\ref{16}) the local semi-basic differential
form $\boldsymbol\epsilon$ corresponds to the Lagrangian
$$
L{{d}}\text{\ssm t}\,,
$$
then the vector valued density
\begin{equation}
\boldkey e=-u^{a}(\text{\ssm E}_{a}\circ\wp){{d}}x^{\sssize0}
{\,+\,}u^{\sssize0}(\text{\ssm E}_{a}\circ\wp){{d}}x^{a}
\label{25}
\end{equation}
corresponds to the Lagrangian
$$
\ell(\zeta, x^{\rho}, u^{\rho}, \dots,{\overset{\sssize r-1}u}{}^{\rho})\,{{d}}\zeta
={\mathcal{L}}(x^{\rho}, u^{\rho}, \dots,{\overset{\sssize r-1}u}{}^{\rho})\,{{d}}\zeta
$$
with the Lagrange function
\begin{equation}
{\mathcal{L}}=u^{\sssize0}L{\,\circ\,}\wp\,.
\label{26}
\end{equation}
\end{proc}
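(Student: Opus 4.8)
The plan is to pin down the density $\boldkey e$ by means of the first variation formula, exploiting the fact that $\wp$ is nothing but the change-of-parameter rule $\text{\ssm t}=x^{\sssize0}$. Recall that $\boldkey e=\mathcal E_\rho\,dx^\rho$ is the Euler--Poisson density of $\mathcal L$ if and only if, for every curve $\gamma:\zeta\mapsto\bigl(x^{\sssize0}(\zeta),x^a(\zeta)\bigr)$ in ${\bold E}^{\sssize3}$ (with $\mathcal L$ defined, i.e.\ $u^{\sssize0}\neq0$) and every compactly supported variation $\gamma_s$ with variation field of components $Y^{\sssize0},Y^a$, one has $\frac{d}{ds}\big|_0\int(\mathcal L\circ j\gamma_s)\,d\zeta=\int\mathcal E_\rho\,Y^\rho\,d\zeta$; in the same way the $\text{\ssm E}_a$ are determined by the analogous formula for $L\,d\text{\ssm t}$ along curves in ${\bold E}^{\sssize2}$. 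Since, as recalled around~(\ref{24}), $\boldkey e$ is a globally defined object, it is enough to establish the coordinate expression~(\ref{25}). The first step is the remark that, along a curve with $u^{\sssize0}=\dot x^{\sssize0}\neq0$, the functions $\text{\ssm v}^a\circ\wp$, $\text{\ssm v}'{}^a\circ\wp$, $\dots$ evaluated on $j\gamma$ are precisely the $\text{\ssm t}$-derivatives of the reparametrised curve $\bar\gamma:\text{\ssm t}\mapsto x^a(\zeta(\text{\ssm t}))$, $\text{\ssm t}:=x^{\sssize0}$, so that $\wp\circ j\gamma=j\bar\gamma$; hence, by~(\ref{26}) and the substitution $\text{\ssm t}=x^{\sssize0}(\zeta)$, $\int(\mathcal L\circ j\gamma)\,d\zeta=\int u^{\sssize0}\,(L\circ\wp\circ j\gamma)\,d\zeta=\int(L\circ j\bar\gamma)\,d\text{\ssm t}$.

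Next I would differentiate this identity along $\gamma_s$. The decisive point is that a variation $Y$ of $\gamma$ induces on $\bar\gamma$ not only the ``spatial'' field $Y^a$ but also a change of parameter: writing $\zeta_s(\text{\ssm t})$ for the solution of $x^{\sssize0}_s(\zeta_s(\text{\ssm t}))=\text{\ssm t}$, differentiation at $s=0$ gives $\frac{d\zeta_s}{ds}\big|_0=-Y^{\sssize0}/u^{\sssize0}$, so that $\bar\gamma_s(\text{\ssm t})=x^a_s(\zeta_s(\text{\ssm t}))$ has variation field of components $Y^a-\frac{u^a}{u^{\sssize0}}Y^{\sssize0}$. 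Because $Y$ is compactly supported, the $\zeta_s$ coincide with $\zeta_0$ near the endpoints, so $\bar\gamma_s$ is again a compactly supported variation and no boundary term survives. Applying the first variation formula for $L\,d\text{\ssm t}$, replacing $d\text{\ssm t}$ by $u^{\sssize0}\,d\zeta$ and using $\text{\ssm E}_a\circ j\bar\gamma=(\text{\ssm E}_a\circ\wp)\circ j\gamma$, one obtains $\frac{d}{ds}\big|_0\int(\mathcal L\circ j\gamma_s)\,d\zeta=\int(\text{\ssm E}_a\circ\wp)\,(u^{\sssize0}Y^a-u^aY^{\sssize0})\,d\zeta$. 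Comparing this with $\int(\mathcal E_{\sssize0}Y^{\sssize0}+\mathcal E_aY^a)\,d\zeta$, recalling that $Y^{\sssize0}$ and $Y^a$ are arbitrary and that every jet with $u^{\sssize0}\neq0$ is realised by some $\gamma$, the fundamental lemma of the calculus of variations yields $\mathcal E_a=u^{\sssize0}(\text{\ssm E}_a\circ\wp)$ and $\mathcal E_{\sssize0}=-u^a(\text{\ssm E}_a\circ\wp)$, which is exactly~(\ref{25})--(\ref{26}); as a by-product the Zermelo identity $u^\rho\mathcal E_\rho=0$ drops out.

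The step I expect to be the main obstacle is the bookkeeping of the second paragraph: getting the induced variation field of $\bar\gamma$ right (the reparametrisation correction $-\frac{u^a}{u^{\sssize0}}Y^{\sssize0}$ is the whole content of formula~(\ref{25}) and is easy to overlook), checking that the prolongations of $\wp$ genuinely intertwine the two jet spaces so that $\text{\ssm E}_a\circ\wp$ is meaningful and agrees with $\text{\ssm E}_a$ along $j\bar\gamma$, and confirming that the boundary contributions vanish. A fully computational alternative is to insert $\mathcal L=u^{\sssize0}\,L\circ\wp$ directly into the Euler--Poisson operator $\mathcal E_\rho=\sum_k(-1)^k\frac{d^k}{d\zeta^k}\bigl(\partial\mathcal L/\partial\overset{\sssize k}{x}{}^{\rho}\bigr)$ and to collapse the resulting chain-rule sums; this works but is far less transparent, so I would keep it only as a consistency check.
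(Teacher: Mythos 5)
The paper states this proposition without any proof (the only \verb|proof| environment in the source belongs to the final proposition on autogeodesic paths), so there is no argument of the author's to compare yours against. On its own merits your argument is correct: it is the classical reparametrisation computation behind the Zermelo (homogeneity) conditions. The two points that carry the content are both handled properly --- the identity $\wp\circ j\gamma=j\bar\gamma$, which justifies $\int(\mathcal L\circ j\gamma)\,d\zeta=\int(L\circ j\bar\gamma)\,d\text{t}$ and makes $\text{E}_a\circ\wp$ meaningful along prolonged curves, and the induced variation field $Y^a-\tfrac{u^a}{u^{0}}Y^{0}$ of the reparametrised curve, which is exactly where the $-u^a(\text{E}_a\circ\wp)\,dx^{0}$ term of~(\ref{25}) comes from; the fundamental lemma then upgrades the integral identity to a pointwise one on the open set $u^{0}\neq0$, and the Zermelo identity $u^\rho\mathcal E_\rho=0$ (the Weierstrass constraint the paper invokes just below) falls out as you note. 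The only loose end worth a sentence is that the substitution $d\text{t}=u^{0}\,d\zeta$ and the solvability of $x^{0}_s(\zeta_s(\text{t}))=\text{t}$ require $x^{0}$ to be monotone along the curve, which is guaranteed on the chart where $\wp$ is defined since $u^{0}\neq0$ there.
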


Let us return to the third-order case. The relations between quantities,
allocated on the space of contact elements $C'M\overset{\text{def}}{=}C^{2}M$ and the corresponding
quantities on the second-order velocity space $T'M\overset{\text{def}}{=}T^{2}M$,
expressed by (\ref{22}, \ref{24}, and~\ref{25}), say, that in~(\ref{2}) we have
$$
\boldkey k\;=\;(\,\bold{\dot{\boldkey u}}\,\boldkey.\,\boldsymbol\partial_{\boldkey u}\,)\;\pmb{{\mathcal{A}}}\,\boldkey.\,\bold{\dot{\boldkey u}}
\;+\;\pmb{{\mathcal{B}}}\,\boldkey.\,\bold{\dot{\boldkey u}}\;+\;\boldkey c
$$
with
\begin{equation}
{\mathcal{A}}_{ab}=(u^{\sssize0})^{-2}\cdot(\text{\ssm A}_{ab}\circ\wp),
\quad{\mathcal{B}}_{ab}=(u^{\sssize0})^{-1}(\text{\ssm B}_{ab}\circ\wp),\quad c_{a}=u^{\sssize0}(\text{\ssm c}_{a}\circ\wp)\,,
\label{27}
\end{equation}
and that the Weierstrass constraint holds:
$$
\pmb{{\mathcal{A}}}\,\boldkey.\,\boldkey u\equiv0,\quad\boldkey k\,\boldkey.\,\boldkey u\equiv0\,.
$$
\subsection{Circles and hyperbolae.}
Let in~(\ref{19}) generator $\pmb{{\frak x}}$ correspond to the \hbox{(pseudo-)} \!ortho\-gonal
transformations of a three-dimensional \hbox{(pseudo-)} \!Euclidean plain. Solving~(\ref{18}) together with~(\ref{21}),
we establish the expressions~(\ref{17}) for this case (see~\cite{CMPh1998} for more details):
\begin{equation}
{\text{\ssb E}}
=-\dfrac{\ast\text{\ssb v}^{\boldsymbol\prime\boldsymbol\prime}}
{(1+\text{\ssb v}\boldsymbol\cdot\text{\ssb v})^{3/2}}
\;+\;3\,\dfrac{\ast\text{\ssb v}^{\boldsymbol\prime}}
{(1+\text{\ssb v}\boldsymbol\cdot\text{\ssb v})^{5/2}}
\boldkey(\text{\ssb v}^{\boldsymbol\prime}\!\boldsymbol\cdot\text{\ssb v}\boldkey)
\;+\;\dfrac m{(1+\text{\ssb v}\boldsymbol\cdot\text{\ssb v})^{3/2}}
\bigl[(1+\text{\ssb v}\boldsymbol\cdot\text{\ssb v})\text{\ssb v}^{\boldsymbol\prime}
\,-\,\boldkey(\text{\ssb v}^{\boldsymbol\prime}\!\boldsymbol\cdot\text{\ssb v}\boldkey)\text{\ssb v}\bigr]\,.
\label{28}
\end{equation}
(The dual to some vector $\text{\ssb w}$ is known to be defined with the help of the
skew-symmetric Levi-Civita symbol $\text{\ssm e}_{ab}$ by means of
$(\ast\text{\ssb w})_{a}=\text{\ssm e}_{ba}\text{\ssm w}^{b}$.)
To convert~(\ref{28}) into a ``homogeneous'' three-dimensional form one applies~(\ref{27}) and obtains the final Euler-Poisson equations, which are
naturally connected to the second prolongation of the transformation group
$[{\bold E}(3,i),{{\bold E}}^{\sssize3}]$:
\begin{equation}
\boxed{
\pmb{{\mathcal{E}}}
=\dfrac{\bold{\ddot{\boldkey u}}\times\boldkey u}{\|\boldkey u\|^{3}}
\;-\;3\,\dfrac{\bold{\dot{\boldkey u}}\times\boldkey u}{\|\boldkey u\|^{5}}
\boldkey(\bold{\dot{\boldkey u}}\boldsymbol\cdot\boldkey u\boldkey)
\;+\;m\,\dfrac {\bold{\dot{\boldkey u}}\,\boldkey(\boldkey u\boldsymbol\cdot\boldkey u\boldkey)
\,-\,\boldkey u\,\boldkey(\bold{\dot{\boldkey u}}\boldsymbol\cdot\boldkey u\boldkey)}
{\|\boldkey u\|^{3}}=0
}
\label{29}
\end{equation}
Furthermore, we can indicate a general formula for the family of the Lagrange functions
which produce the expression~(\ref{29}):
$$
{\mathcal{L}}_{(\rho)}
\;=\;\dfrac{u^{\rho}[\bold{\dot{\boldkey u}},\boldkey u,\boldkey e_{(\rho)}]}
{\|\boldkey u\|\,\|\boldkey u\times\boldkey e_{(\rho)}\|^{2}}
\;-\;m\,\|\boldkey u\|
\;+\;\bold{\dot{\boldkey u}}\,\boldkey.\,\boldsymbol\partial_{\boldkey u}\;\phi
\;+\;\boldkey a\,\boldkey.\,\boldkey u\,,
$$
where an arbitrary row vector $\boldkey a$ is constant and a function $\phi$ depending on the variable
$\boldkey u$ is subject to the constraint $\boldkey u\,\boldkey.\,\boldsymbol\partial_{\boldkey u}\,\phi\;=\;0$.
(Recall the notation $[\;,\;,\;]$ for the parallelepipedal product of three vectors.)
The vector $\boldkey e_{\rho}$ denotes the $\rho$-th component of the
\hbox{(pseudo-)} \!Euclidean frame.
{\it Each ${\mathcal{L}}_{(\rho)}$ fits in.}

Although {\it there does not exist an invariant (even in extended sense)
Lagrange  function}, the equations~(\ref{29}) {\it are invariant\/} with respect to the group under consideration.
Namely, let
\begin{equation}
\pmb{{\frak x}''}\;=\;\boldkey[\boldsymbol\varpi, \boldkey x,
\boldsymbol\partial_{\boldkey x}\boldkey]
\;+\;\boldkey[\boldsymbol\varpi, \boldkey u, \boldsymbol\partial_{\boldkey u}\boldkey]
\;+\;\boldkey[\boldsymbol\varpi, \bold{\dot{\boldkey u}}, \boldsymbol\partial_{\bold{\dot{\boldkey u}}}\boldkey]
\;+\;\boldkey[\boldsymbol\varpi, \bold{\ddot{\boldkey u}}, \boldsymbol\partial_{\bold{\ddot{\boldkey u}}}\boldkey]
\label{30}
\end{equation}
stand for the third-order prolongation of the infinitesimal \hbox{(pseudo-)} \!Euclidean
transformations to the manifold $T''M$ with $\boldsymbol\varpi$ for the
group parameter. Then
$$
{\bold L}(\pmb{{\frak x}''})(\pmb{{\mathcal{E}}})\;=\;\boldsymbol\varpi\times\pmb{{\mathcal{E}}}\,.
$$
\begin{rem}
Assuming $m=0$ in~(\ref{29}), we recover geodesic circles as
integral curves, and in the case the index $i$ in ${\bold E}(3,i)$ equals $2$
this amounts to uniformly accelerated motion in three-dimensional special relativity.
\end{rem}
\section{Euclidean space. Connection.}
In a \hbox{(pseudo-)} \!orthonormal frame of reference, the corresponding third-order
frame takes on the shape
\begin{equation}
r^{\rho}_{\beta}=\delta^{\rho}_{\beta},\quad r^{\rho}_{\beta\gamma}=0,\quad
r^{\rho}_{\beta\gamma\nu}=0\,,
\label{31}
\end{equation}
so the structure forms $\omega^{\rho}_{\beta}$, $\omega^{\rho}_{\beta\gamma}$,
and $\omega^{\rho}_{\beta\gamma\nu}$ vanish and we can identify
$\u$, $\du$, and $\ddu$ with the ``invariant coordinates'' $U$, $\dot U$, and $\ddot U$ respectively.

In order to construct a connection, consistent with the equation~(\ref{29}),
we first supplement the two independent expressions, entering in~(\ref{29}), with an arbitrary additional one. Without loss of
generality we can search for the latter in the form
\begin{equation}
\ddu\,\boldkey.\,\u\;=\;\|\u\|^{2}\cdot\Psi(\u,\du)\,.
\label{32}
\end{equation}
The system of equations~(\ref{29} and~\ref{32}) can now be solved with
respect to the third-order derivatives to produce:
\begin{equation}
\ddu\;=\;3\,\dfrac{\du\bd \u}{\|\u\|^{2}}\,\du\;-\;3\,\dfrac{(\du\bd \u)^{2}}{\|\u\|^{4}}\u
\;-\;m\,\u\times\du\;+\;\Psi\cdot\u\,.
\label{33}
\end{equation}
Now we proceed further to define more precisely the arbitrary function $\Psi$.
With (\ref{11} and~\ref{12}) we calculate $\mu$ and $\lambda$ for the equation~(\ref{33}):
\begin{align}
\mu\;&=\;\dfrac1{3}\,\left(\,2\Psi\,-\,2\du\pp{\Psi}{\du}\,-\,\u\pp{\Psi}{\u}\,\right)\,,
\label{34}
\\
\lambda\;&=\;\dfrac{\u\bd\du}{\|\u\|^{2}}\,-\,\dfrac1{3}\,\u\pp{\Psi}{\du}\,.
\label{35}
\end{align}
In the reference frame~(\ref{31}) by virtue of~(\ref{13}) we conclude that
$\mu$ and $\lambda$ are constant. Then the compatibility conditions for the system
of partial differential equations~(\ref{34}) and~(\ref{35}) show, that $\lambda$
must be equal to zero,
\begin{equation}
\lambda=0\,.
\label{36}
\end{equation}
Set
$$
\Psi\;=\;\dfrac3{\|\u\|^{2}}\,\left(\,\psi\;+\;\dfrac1{2}\|\du\|^{2}\right)\,.
$$
For the function $\psi$ we now get
\begin{equation}
\left(\u\bd\p{\du}\right)\,\psi\;=\;0\,.
\label{37}
\end{equation}
Introducing the intermediate variable $\z=\u\times\du$ we see by~(\ref{37})
that the function $\psi$ depends on $\u$ and $\du$ via the variable $\z$ only.
Now we express the equation~(\ref{34}) in terms of $\z$ to get
\begin{equation}
3\,\z\bd\pp{\psi}{\z}\;=\;4\psi\;-\;\|\u\|^{2}\mu\,.
\label{38}
\end{equation}
Again, the compatibility conditions for~(\ref{38}) turn $\mu$ to zero,
\begin{equation}
\mu=0\,.
\label{39}
\end{equation}

To settle the matter definitely, we call upon the demand of \hbox{(pseudo-)} \!Euclidean
symmetry (with the generator~(\ref{30})) for the equation~(\ref{33}), which gives
\begin{equation}
\z\times\p{\z}\;\psi\;=\;0\,.
\label{40}
\end{equation}
Altogether (\ref{38}, \ref{39}, and~\ref{40}) produce for the determination of the function $\psi$
the equation
$$
\pp{\psi}{\z}\;=\;=\dfrac4{3}\,\dfrac{\z}{\|\z\|^{2}}\,\psi
$$
with the solution $\psi=\|\z\|^{4/3}$. Thus the function $\Psi$ has been found,
$$
\Psi\;=\;\dfrac3{2}\,\dfrac{\|\du\|^{2}}{\|\u\|^{2}}\;+\;3A\,\dfrac{\|\du\times\u\|^{4/3}}{\|\u\|^{2}},\qquad
\text{$A$ is a real number}.
$$

Let us introduce the following obvious definition:
\begin{definition}
For any smooth transformation group $G$ acting upon a manifold $M$ let a curve
$\sigma:\:I\to M, \;I\subset{\mathbb{R}}$ be called an {\it autogeodesic path\/} if
\begin{enumerate}
\item the image $\sigma(I)$ is an extremal submanifold of some parameter-independent
variation problem;
\item the curve $\sigma$  is autoparallel with respect to some (nonlinear, higher-order)
connection on $M$;
\item the corresponding autoparallel transport equation is $G$-invariant.
\end{enumerate}
\end{definition}

In view of the preceding considerations we now are capable of calculating the coefficients
$(\Gamma^{\rho}_{\beta},\Gamma^{\rho}_{\beta\gamma})$ of the connection, given by~(\ref{14}).
Rather then make this, it appears more economic to accomplish only with the presentation
of the explicit expression for the corresponding autogeodesic path
equation.
\begin{proc}
\noindent\begin{enumerate}
\item
The third-order
autogeodesic paths of the three-dimensional \hbox{(pseudo-)} \!Eucli\-dean
space are the solutions of the next differential equation:
{
\begin{equation}
\boxed{
\ddu=3\dfrac{\du\bd \u}{\|\u\|^{2}}\du-3\left[\dfrac{(\du\bd \u)^{2}}{\|\u\|^{4}}
-\dfrac{\|\du\|^{2}}{2\|\u\|^{2}}-A\dfrac{\|\du\times\u\|^{4/3}}{\|\u\|^{2}}\right]\u
-m\,\u\times\du
}
\label{41}
\end{equation}
}
\item The corresponding connection is stable.
\end{enumerate}
\end{proc}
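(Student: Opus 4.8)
The plan is to verify, for the curves singled out by~(\ref{41}), the three clauses in the definition of an autogeodesic path, and then to read off part~(2) from the reducibility computation just performed; the converse --- that no other equation of the form~(\ref{33}) qualifies --- is exactly the content of the determination of $\Psi$ carried out above.

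First I would observe that~(\ref{41}) results from~(\ref{33}) upon inserting the function
$$
\Psi\;=\;\dfrac3{2}\,\dfrac{\|\du\|^{2}}{\|\u\|^{2}}\;+\;3A\,\dfrac{\|\du\times\u\|^{4/3}}{\|\u\|^{2}}
$$
found above, and that, by the very way~(\ref{33}) was built from the pair~(\ref{29})--(\ref{32}), equation~(\ref{41}) is equivalent to the conjunction of~(\ref{29}) and~(\ref{32}). In particular every solution of~(\ref{41}) solves~(\ref{29}), so its image carries an extremal of the parameter-independent variational problem set by any of the Lagrangians ${\mathcal{L}}_{(\rho)}\,{{d}}\zeta$ written down above; this settles clause~(1). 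For clause~(3) I would use the same equivalence: the zero locus of $\pmb{{\mathcal{E}}}$ is invariant under the prolongation~(\ref{30}) in view of ${\bold L}(\pmb{{\frak x}''})(\pmb{{\mathcal{E}}})=\boldsymbol\varpi\times\pmb{{\mathcal{E}}}$, while the scalar relation~(\ref{32}) equates two (pseudo-)Euclidean invariants --- both of its sides being assembled from $\|\u\|$, $\|\du\|$ and $\|\du\times\u\|$, which is precisely what~(\ref{37}) and~(\ref{40}) secured; hence the solution locus of~(\ref{41}), being the intersection of two invariant loci, is invariant, which is clause~(3).

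For clause~(2), and for part~(2), I would invoke the Proposition of \cite{EvtushikKenerovo}. The point to check before applying it is that~(\ref{41}) is \emph{strictly} reducible, i.e.\ that the multipliers of~(\ref{34}) and~(\ref{35}) both vanish for the chosen $\Psi$. This follows by a short substitution: writing $\|\du\times\u\|^{2}=\|\du\|^{2}\|\u\|^{2}-(\du\bd\u)^{2}$ shows that the $\du$-gradient of the $A$-term is $\u$-orthogonal, whence ${\tsize\frac13}\,\u\bd\partial_{\du}\Psi=\u\bd\du/\|\u\|^{2}$ and $\lambda=0$ as stated in~(\ref{36}); a parallel homogeneity count in~(\ref{34}) gives $\mu=0$ as in~(\ref{39}). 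Strict reducibility granted, the cited Proposition yields at once that the connection~(\ref{14}) attached to~(\ref{41}) is stable --- this is part~(2) --- that $\lambda^{\sssize(1)}$ and $\lambda^{\sssize(2)}$ in~(\ref{9}) vanish, and that the autoparallel equation~(\ref{9}) of that connection is precisely~(\ref{41}), which is clause~(2).

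The only step that is not purely mechanical is the regularity of solving~(\ref{29})--(\ref{32}) for $\ddu$; this is guaranteed by the rank-two structure of the skew matrix $\pmb{{\mathcal{A}}}$ together with the fact that the direction it annihilates is exactly the one fixed by~(\ref{32}). I therefore anticipate no real obstacle, since the substantive work --- the determination of $\Psi$ and the vanishing of $\mu$ and $\lambda$ --- was completed already in the passage preceding the statement.
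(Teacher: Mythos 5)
Your argument is correct, and it reaches the same conclusion as the paper by an essentially equivalent but slightly different route. The paper's own proof is a one-line instruction: compute $\Gamma^{\rho}_{\beta}$ and $\Gamma^{\rho}_{\beta\gamma}$ directly from~(\ref{14}) and check that the right-hand side of~(\ref{9}) reproduces the right-hand side of~(\ref{41}), with part~(2) read off from (\ref{36}) and~(\ref{39}). You sidestep that computation by invoking the quoted Proposition of \cite{EvtushikKenerovo}: once strict reducibility ($\mu=0$, $\lambda=0$) is in hand, the attached connection is stable and its autoparallel equation~(\ref{9}) coincides with the initial equation, here~(\ref{41}). Since that Proposition is part of the paper's own apparatus and the reducibility facts were already established in the preceding section, this is a legitimate and more economical derivation of the same statement. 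You also do something the paper's proof leaves implicit, namely checking all three clauses of the autogeodesic definition: clause~(1) via the equivalence of~(\ref{41}) with the conjunction of~(\ref{29}) and~(\ref{32}) (so every solution is an extremal of the parametric problem with Lagrangian ${\mathcal{L}}_{(\rho)}$), and clause~(3) via the invariance of both loci. Your verification that $\lambda=0$ (the $\du$-gradient of the $A$-term being $\u$-orthogonal, from $\|\du\times\u\|^{2}=\|\du\|^{2}\|\u\|^{2}-(\du\bd\u)^{2}$) and that $\mu=0$ (the homogeneity count $2-\tfrac83+\tfrac23=0$ for the $A$-term and $2-4+2=0$ for the other) is sound. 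The net effect is a proof that is more explicit about why~(\ref{41}) satisfies the definition, at the cost of relying on the cited Proposition rather than exhibiting the connection coefficients; the paper's version, if carried out, would produce those coefficients explicitly as a by-product.
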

\begin{proof}
It is necessary to calculate $\Gamma^{\rho}_{\beta}$ and $\Gamma^{\rho}_{\beta\gamma}$ from
(\ref{14}) and to show that the right-hand side of~(\ref{9}) coincides with
the right-hand side of~(\ref{41}). The second statement follows from~(\ref{36} and~\ref{39}). \end{proof}



\begin{thebibliography}{1}

\bibitem{EvtushikKenerovo}
L.~E. Evtushik, \emph{Higher-order stable nonlinear connections and ordinary
  differential systems of corresponding order reduced to them}, Theory of
  functions and its applications, 48–56, Kemer. Gos. Univ., Kemerovo [{\selectlanguage{russian}Л.~Е. Евтушик, \emph{Стабильные нелинейные связности высших порядков и редукция к ним обыкновенных дифференциальных систем соответствующего порядка}, Теория функций и её приложения,  Кемер. гос. унив., Кемерово}], 1985,
  p.~48–56 (Russian) MR0863612 (87k:53055).

\bibitem{EvtushikTretyakov_translation}
L.~E. Evtushik and V.~B. Tret'yakov, \emph{Invariant description of ordinary
  systems in terms of nonlinear connections}, Soviet Math. (Iz. VUZ)  \textbf{30} (1986), no.~1, 26–40.  [\textsl{English translation of:} {\selectlanguage{russian}Л.~Е.~Евтушик, В.~Б.~Третьяков, Изв. Вузов. Математика  (1985), №1(284), 21--32} (Russian)] MR0838428 (87h:58008).
 
\bibitem{KobayashiCanonicalforms}
Shoshichi Kobayashi, \emph{Canonical forms on frame bundles of higher order
  contact}, Differential Geometry. \textsl{Proceeding of Symposia in Pure
  Mathematics}, AMS, Providence, 1961, p.~186–193.

\bibitem{KolarNoveMesto}
I.~Kolář, \emph{Lie derivatives and higher-order lagrangians}, {\sl
  Proceedings of the Conference (ČSSR–GDR–Poland) on Differential Geometry
  and its Applications (Nové Město na Moravě, September, 1980)}, Univ.
  Karlova, Prague, 1981, p.~117–123.

\bibitem{Theses}
R.~Ya. Matsyuk, \emph{Poincaré-invariant equations of motion in lagrangian
  mechanics with higher derivatives}, Ph.D. thesis, Institute for Applied
  Problems in Mechanics and Mathematics, Academy of Science. Ukraine, L'viv,
  1984 (Russian).

\bibitem{SovPhys30_6}
\bysame, \emph{Lagrangian analysis of invariant third-order equations of motion
  in relativistic classical particle mechanics}, Soviet Phys. Dokl. \textbf{30}
  (1985), no.~6, 458–460. [\textsl{English translation of: }{\selectlanguage{russian}Р.~Я. Maцюк,  ДАН СССР \textbf{282}  (1985), №4, 841--844} (Russian)] MR0802859 (87d:70028).

\bibitem{Methods}
\bysame,  \emph{Exterior differential equations in generalized mechanics and
  symmetry properties}, Methods for studying differential and integral
  operators, Naukova Dumka, Kyiv [{\selectlanguage{russian}Р.~Я. Maцюк, \emph{Внешние дифференциальне уравнения в обобщённой механике и свойства симметрии}, Методы исследования дифференциальных и интегральных операторов, Наукова думка, Київ}], 1989, pp.~153–160, 217 (Russian) MR1109955 (92b:58009).

\bibitem{SymVectorForms}
\bysame, \emph{Symmetries of vector exterior differential systems and the
  inverse problem in second-order {O}strograds'kii mechanics. \textsl{{P}roc.
  conf. `{S}ymmetry in nonlinear mathematical physics', Vol.~4 ({K}yiv,
  1995)}}, \href[pdfnewwindow]{http://staff.www.ltu.se/~norbert/home_journal/electronic/v4n1-2.html}{J. Nonlinear Math. Phys. \textbf{4} (1997), no.~1–2}, 89–97. MR1401574 (97h:58009). Corrected version: \href[pdfnewwindow]{http://arxiv.org/abs/1406.5877}{arXiv:1406.5877}.

\bibitem{CMPh1998}
\bysame, \emph{Third-order relativistic dynamics: classical spinning particle travelling in a plane}, \href[pdfnewwindow]{http://www.icmp.lviv.ua/journal/zbirnyk.15/}{Condensed Matter Physics, \textbf{1} (1998),  no~3(15)}, 453–462. Corrected version: \href[pdfnewwindow]{http://arxiv.org/abs/1304.7494v1}{arXiv:1304.7494v1}.

\bibitem{TulczyjewResolution}
W.~M. Tulczyjew, \emph{The {E}uler–{L}agrange resolution}, Differential
  Geometrical Methods in Mathematical Physics. \textsl{Proc. conf.,
  Aix-en-Provence/Salamanca, 1979}, Lecture Notes in Math., no. 836, Springer,
  Berlin, 1980, p.~22–48.

\end{thebibliography}
\providecommand{\bysame}{\leavevmode\hbox to3em{\hrulefill}\thinspace}
\providecommand{\MR}{\relax\ifhmode\unskip\space\fi MR }
\providecommand{\MRhref}[2]{%
  \href{http://www.ams.org/mathscinet-getitem?mr=#1}{#2}
}
\providecommand{\href}[2]{#2}

\enddocument